\newcommand{\typeof}[1]{|#1|}
\newcommand{\eqof}[1]{=_{#1}}
\newcommand{\rsymb}[0]{{\mathcal R}}
\newcommand{\rprod}[0]{\times}
\newcommand{\rpw}[0]{{\bf P}}
\newcommand{\rin}[0]{\;\epsilon\;}
\newcommand{\rlevel}[1]{|#1|}
\newcommand{\elevel}[1]{||#1||}
\newcommand{\rint}[1]{[#1]^R}
\newcommand{\inmax}[0]{\lor}
\newcommand{\vsymb}[2]{{\sf v}_{#1}^A}
\newcommand{\vf}{\varphi}
\newcommand{\unittype}[0]{{\bf 1}}
\newcommand{\theelement}[0]{\star}
\newcommand{\nattype}[0]{{\bf N}}
\newcommand{\suc}[1]{{\sf S}(#1)}
\newcommand{\pair}[2]{\langle #1, #2 \rangle}
\newcommand{\fst}[1]{ \pi_1(#1)}
\newcommand{\snd}[1]{ \pi_2(#1)}
\newcommand{\terms}[1]{{\rm Term}(#1)}
\newcommand{\formulas}[1]{{\rm Form}(#1)}
\newcommand{\ipret}[1]{{#1}^{*}}
\newcommand{\cutout}[1]{}
\newtheorem{thm}{Theorem}[section]
\newtheorem{lemma}[thm]{Lemma}
\newtheorem{examples}[thm]{Examples}
\newtheorem{remark}[thm]{Remark}
\title{A Constructive Examination of a Russell-style \\
 Ramified Type Theory }
\author{Erik Palmgren }
\date{April 21, 2017}     
\thanks{The author was supported by a grant from the Swedish Research Council (VR). Author's address: Department of Mathematics, Stockholm University, 106 91 Stockholm. Email: palmgren[at]math.su.se. }
\begin{document}

\begin{abstract}
In this paper we examine the natural interpretation of a ramified
type hierarchy into Martin-L{\"o}f type theory with an infinite sequence
of universes. It is shown that under this predicative interpretation some useful
special cases of Russell's reducibility axiom are valid, namely functional reducibility. This is sufficient
to make the type hierarchy usable for development of constructive
mathematics. We present a ramified type theory suitable for this 
purpose. One may regard the results of this paper as an alternative solution to the problems
of Russell's theory, which avoids impredicativity, but instead imposes constructive logic.
The intuitionistic ramified type theory introduced here, also suggests that there is a natural 
associated notion of predicative elementary topos.
\medskip
{\flushleft \em Mathematics Subject Classification (2010):} 03B15, 03F35, 03F50
\end{abstract}

\maketitle
% Perhaps: include this dedication
%
%\rightline{\em Dedicated to Per Martin-L\"of  on }
%\rightline{\em the occasion of his 75th birthday.}
%\medskip
%\subsection{}

Russell introduced with his ramified type theory a distinction between
different levels of propositions in order to solve logical paradoxes,
notably the Liar Paradox and the paradox he discovered in Frege's system
(Russell 1908). To be able to carry out certain mathematical
constructions, e.g.\ the real number system, he was then compelled to introduce the reducibility
axiom. This had however the effect of collapsing the ramification, from an
extensional point of view, and thus making the system impredicative. The original Russell theory
is not quite up to modern standards of presentation of a formal system: a treatment of substitution
is lacking.
In the article by Kamareddine, Laan and
Nederpelt (2002) however a modern reconstruction of Russell's
type theory using lambda-calculus notation is presented. We refer to their article for further background and history. 

In this paper we shall present an intuitionistic version of
ramified type theory IRTT.  By employing a
restricted form of reducibility it can be  shown to be
predicatively acceptable.  This axiom,
called the Functional Reducibility Axiom (FR),  reduce type levels only
of total functional relations. The axiom is enough to
handle the problem of proliferation of levels of real numbers encountered 
in Russell's original theory (Kamareddine {\em et al.} 2002, pp.\ 231 -- 232). It is essential that theory is based on intuitionistic logic,
as (FR) imply the full reducibility principle using classical logic.   The system IRTT is
demonstrated to be predicative by interpreting it in a subsystem of Martin-L{\"o}f type theory (Martin-L{\"o}f 1984),
a system itself predicative in the proof-theoretic 
sense of Feferman and Sch\"utte (Feferman 1982). 
One may regard the results of this paper as an alternative solution to the problems
of Russell's theory which avoids impredicativity, but instead imposes constructive logic.

The interpretation is carried out in Sections 2 and 3.  In Section 4, 
we see how universal set constructions useful for e.g.\ formalizing real numbers can be 
carried out in IRTT.  The intuitionistic ramified type theory introduced here, also suggests that there is a natural 
associated notion of predicative elementary topos,  but this will have to be developed elsewhere. Adding the principle of excluded middle to IRTT makes  the full reducibility axiom a theorem (Section 5).

An extended abstract of an early version of this paper has been published as \cite{P08}.

\section{Ramified Type Theory} 

Our version of  the ramified type hierarchy is built from basic
types $\unittype$ (the unit type) and $\nattype$ (the type of natural
number) using the product type construction $\times$, and 
for each $n=0,1,2,\ldots$ the
restricted power set operation ${\bf P}_n(S)$. The latter 
operation assigns to each type $S$
the type of level $n$ propositional functions on $S$, 
or level $n$ subsets of $S$.  (See Remark \ref{russelltypes} below for a comparison
with Russell's ramified types.)
Intuitively these power sets form an increasing sequence:
$$\rpw_0(S) \subseteq \rpw_1(S) \subseteq \rpw_2(S) \subseteq \cdots$$
To avoid impredicativity  when 
forming a subset
$$\{x: A \;|\; \vf(x) \} : \rpw_k(A)$$ 
it is required that $\vf(x)$ does
not contain quantifiers over $\rpw_n(S)$ where
$n \ge k$. 
A version of the {\em full reducibility axiom} says that this 
hierarchy collapses from an extensional point of view: for each level $r$,
\begin{equation}\label{fr}
(\forall X: \rpw_r(S))(\exists Y:\rpw_0(S))(\forall z: S)(z \rin X
\Leftrightarrow z \rin Y).
\end{equation}
This has the effect of reintroducing 
impredicativity, as was observed by Ramsey (Ramsey 1926); see also Myhill (1979). However, 
a special case of the reducibility axiom is predicatively acceptable
if we work against the background of intuitionistic logic.
  This is shown by modelling it in Martin-L{\"o}f type theory.

We now turn to the formal presentation of our theory.
The set of {\em ramified type symbols } $\rsymb$ is inductively defined by
\begin{itemize}
\item
 $\unittype, \nattype  \in \rsymb$,
\item if $A,B \in \rsymb$, then $A \rprod B \in \rsymb$,
\item if $A \in \rsymb$ and $n \in {\mathbb N}$, then $\rpw_n(A) \in \rsymb$.
\end{itemize}
The {\em level} of a type symbol $A$, $\rlevel{A}$,  is defined recursively
\begin{eqnarray*}
\rlevel{\unittype} &= & \rlevel{\nattype} = 0 \\
\rlevel{A \times B} &= & \max(\rlevel{A},\rlevel{B})\\
\rlevel{\rpw_n(A)} &= &\max(n+1, \rlevel{A}).
\end{eqnarray*}
For instance, the level of the type expression 
${\bf P}_3(\nattype) \times {\bf P}_1(\nattype \times {\bf P}_1(\unittype))$
is 4. Let $\rsymb_n = \{A \in \rsymb \: : \: \rlevel{A} \le n \}$. 
We also define the {\em equality level}Ê  $\elevel{A}$ of a type $A$ recursively
by 
\begin{eqnarray*}
\elevel{\unittype} &= & \elevel{\nattype} = 0 \\
\elevel{A \times B} &= & \max(\elevel{A},\elevel{B})\\
\elevel{\rpw_n(A)} &= &\max(n, \rlevel{A}).
\end{eqnarray*}
(The significance of this measure is seen in Lemma \ref{elevel_lm}.)
Below we often write $n_1 \inmax \cdots \inmax n_k$ for $\max(n_1,\ldots, n_k)$.

Our system IRTT of {\em intuitionistic ramified type theory} will be based on many-sorted
intuitionistic logic. The sorts will be the types in $\rsymb$.  We define simultaneously the
set of terms $\terms{A}$ of type $A \in \rsymb$ and the set of
formulas of level $k\in {\mathbb N}$, denoted $\formulas{k}$.
\begin{itemize}
\item For each $A \in \rsymb$ there is a countable sequence of variables of sort $A$: 
$\vsymb{0}{A}, \vsymb{1}{A}, \vsymb{2}{A},\ldots$ in $\terms{A}$;
\item $\theelement \in \terms{\unittype}$;
\item $0 \in \terms{\nattype}$;
\item If $a \in \terms{\nattype}$, then $\suc{a} \in \terms{\nattype}$;
\item If $a,b \in \terms{\nattype}$, then $a+b, a \cdot b \in \terms{\nattype}$;
\item If $a \in \terms{A}$, $b \in \terms{B}$, then $\pair{a}{b} \in \terms{A \times B}$;
\item If $c \in \terms{A \times B}$, then $\fst{c} \in \terms{A}$ and $\snd{c} \in \terms{B}$,
\item If $\vf \in \formulas{k}$ and $x$ is a variable in $\terms{A}$, then 
$$\{x : A \; | \; \vf \} \in \terms{ {\rpw_k(A)} } $$
(This is the set-abstraction term and $x$ is considered to be a bound variable in this term.)
\item If $\rlevel{A} \le k$ and $a, b \in \terms{A}$, then $(a =_A b) \in \formulas{k}$;
\item If $a \in \terms{A}$ and $b\in \terms{\rpw_n(A)}$, then $(a \rin b) \in \formulas{k}$ for any $k \ge n$;
\item $\bot \in \formulas{k}$;
\item If $\vf, \psi \in \formulas{k}$, then $(\vf \lor \psi), (\vf \land \psi), (\vf \Rightarrow \psi) \in \formulas{k}$;
\item If  $\vf \in \formulas{k}$ and $x$ is a variable in $\terms{A}$ where $\rlevel{A} \le k$, then
$(\forall x: A)\vf, (\exists x: A)\vf \in \formulas{k}$.
\end{itemize}

It is clear that 
$$\formulas{0} \subseteq \formulas{1} \subseteq \formulas{2} \subseteq \cdots $$

The axioms of ramified type theory are the following.
First there are standard axioms for equality stating that each
$=_A$ is an equivalence relation and that operations  and predicates respect
these equivalence relations.
%\begin{eqnarray*}
%&&x =_A x \\
%&&x =_A y \Longrightarrow y =_A x\\
%&&x =_A y  \land y =_A z \Longrightarrow x =_A z\\
%\end{eqnarray*}
%
%\begin{eqnarray*}
%&& x=_A u \land y=_B v \Longrightarrow \pair{x}{y} =_{A \times B} \pair{u}{v}\\
%&& z =_{A\times B} w \Longrightarrow \fst{z} =_A \fst{w} \land \snd{z} =_B \snd{w}\\
%&& x=_{\nattype} u \Longrightarrow \suc{x} =_{\nattype} \suc{u}\\
%&& x=_{\nattype} u \land y=_{\nattype} v \Longrightarrow x+y =_{\nattype} u+v \land x\cdot y =_{\nattype} u \cdot v\\\
%&& x \rin X \land x=_A y \land X=_{\rpw_k(A)} Y \Longrightarrow y \rin Y \\
%\end{eqnarray*}
Axioms for unit type and product type
\begin{eqnarray*}
z &=_{\unittype} & \theelement \\
\fst{\pair{x}{y}} &=_A& x \\
\snd{\pair{x}{y}} &=_B& y \\
\pair{\fst{z}}{\snd{z}} &=_{A \times B} &z. 
\end{eqnarray*}
The arithmetical axioms are the standard Peano axioms for $0$, $S$, $+$ and $\cdot$,
together with the induction scheme.
%For the successor operation we have
%\begin{eqnarray*}
%&& \lnot 0 = \suc{x} \\
%&& \suc{x} = \suc{y}  \Longrightarrow x = y \\
%\end{eqnarray*}
%The defining axioms for the arithmetical operations are
%\begin{eqnarray*}
%x + 0         &=& x \\
%x + \suc{y} &=& \suc{x+y} \\
%x \cdot 0 &=& 0 \\
%&x \cdot \suc{y} &=& x \cdot y + x \\
%\end{eqnarray*}

%Induction scheme: For any formula $\vf$ and any variable $x \in \terms{\nattype}$ we have 
%$$\vf[0/x] \land (\forall z: \nattype)(\vf[z/x] \Rightarrow \vf[\suc{z}/x]) \Rightarrow (\forall z: \nattype)\vf[z/x].$$

For subsets we have the following axioms
\begin{itemize}
\item[] {\em Axiom of Extensionality:} 
$$(\forall X,Y : \rpw_k(A))((\forall z: A)(z \rin X \Leftrightarrow z \rin Y) \Rightarrow X=_{\rpw_k(A)} Y)$$
\item[] {\em Defining Axiom for Restricted Comprehension:}
$$(\forall z: A)(z \rin \{x:A \; |\; \vf\} \Leftrightarrow \vf[z/x]).$$
\end{itemize}

The extensionality axiom gives the following
\begin{lemma} \label{elevel_lm}
 For any type symbol $A$, there is a formula $\varphi_A(x,y)$ 
in $\formulas{\elevel{A}}$ such that
$$x=_A y \Longleftrightarrow \varphi_A(x,y).$$
\end{lemma}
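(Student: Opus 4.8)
The plan is a structural induction on the type symbol $A$: in each case I exhibit $\varphi_A(x,y) \in \formulas{\elevel A}$ explicitly, and then check (i) that IRTT derives $x =_A y \Leftrightarrow \varphi_A(x,y)$ and (ii) that the formula written down really does lie in $\formulas{\elevel A}$.

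For the base types $A = \unittype$ and $A = \nattype$ I would just take $\varphi_A(x,y)$ to be the atomic formula $x =_A y$ itself; since $\rlevel A = 0 = \elevel A$ this already belongs to $\formulas 0$ and there is nothing to prove. For $A = B \times C$ I would set
$$\varphi_{B \times C}(x,y) \;:\equiv\; \varphi_B(\fst{x},\fst{y}) \ \land\ \varphi_C(\snd{x},\snd{y}),$$
with $\varphi_B,\varphi_C$ given by the induction hypothesis. Using that $\fst{\cdot},\snd{\cdot}$ and $\pair{\cdot}{\cdot}$ respect the relevant equalities together with the product axiom $\pair{\fst{z}}{\snd{z}} =_{B \times C} z$ (instantiated at $x$ and $y$), one derives $x =_{B \times C} y \Leftrightarrow (\fst{x} =_B \fst{y}) \land (\snd{x} =_C \snd{y})$; the induction hypothesis then rewrites the right-hand side as $\varphi_B(\fst{x},\fst{y}) \land \varphi_C(\snd{x},\snd{y})$. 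Since $\formulas 0 \subseteq \formulas 1 \subseteq \cdots$ and conjunction does not raise the level, this formula lies in $\formulas{\max(\elevel B,\elevel C)} = \formulas{\elevel{B \times C}}$. For $A = \rpw_n(B)$ I would take
$$\varphi_{\rpw_n(B)}(x,y) \;:\equiv\; (\forall z : B)(z \rin x \Leftrightarrow z \rin y)$$
with $z$ a fresh variable of sort $B$; here the Axiom of Extensionality supplies the implication from right to left, and the congruence axiom for the predicate $\rin$ (in its second argument) supplies the converse, so $x =_{\rpw_n(B)} y \Leftrightarrow \varphi_{\rpw_n(B)}(x,y)$. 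The induction hypothesis is not even needed in this last case.

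The only genuine bookkeeping is the level count for the power-set case, and it is precisely what the definition of $\elevel{\cdot}$ is tailored to: since $x$ has sort $\rpw_n(B)$, the atomic formula $z \rin x$ belongs to $\formulas k$ for every $k \ge n$, in particular for $k = n$, while the quantifier $(\forall z : B)$ is permitted in $\formulas k$ exactly when $\rlevel B \le k$; hence $\varphi_{\rpw_n(B)}(x,y) \in \formulas{\max(n,\rlevel B)} = \formulas{\elevel{\rpw_n(B)}}$, which in general is strictly below the level $\rlevel{\rpw_n(B)} = \max(n+1,\rlevel B)$ at which the native equality $x =_{\rpw_n(B)} y$ first appears — that gap is exactly the content of the lemma. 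I would also record the routine closure fact that substituting a term of sort $B$ for a free variable of sort $B$ in a formula of $\formulas k$ again produces a formula of $\formulas k$ (the level of a formula depends only on the sorts appearing in its atomic subformulas and its quantifiers, not on the particular terms substituted), since this is what legitimises forming $\varphi_B(\fst{x},\fst{y})$ and applying the induction hypothesis under the substitutions $x \mapsto \fst{x}$, $y \mapsto \fst{y}$ in the product step. Beyond that I anticipate no real obstacle: the argument is a direct induction whose only delicate points are keeping the two equality axioms — congruence and extensionality — apart and getting the max-arithmetic of levels right.
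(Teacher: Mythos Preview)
Your proof is correct and follows the same structural induction as the paper. The one place you are actually more careful than the paper is the product case: the paper writes $\varphi_{B\times C}(x,y)\equiv(\pi_1(x)=_B\pi_1(y)\land\pi_2(x)=_C\pi_2(y))$ using the native equality symbols, whereas you use the inductively supplied $\varphi_B,\varphi_C$; your version is what is needed to guarantee the level bound, since $\rlevel{B}$ may exceed $\elevel{B}$ (e.g.\ $B=\rpw_0(\unittype)$), so the literal formula $\pi_1(x)=_B\pi_1(y)$ need not lie in $\formulas{\elevel{B\times C}}$. The paper's concluding ``by induction'' presumably intends exactly your reading.
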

\begin{proof} Let $\varphi_A(x,y) \equiv (x=_Ay)$ for $A=\unittype, \nattype$,
and $\varphi_{B \times C}(x,y) \equiv (\pi_1(x)=_B \pi_1(y) \land 
\pi_2(x)=_C \pi_2(y))$. Further, define
$$\varphi_{\rpw_n(A)}(x,y) \equiv (\forall z:A)(z \rin x \Leftrightarrow z \rin y).$$
The righthand side formula has level $\max(n, |A|) = || \rpw_n(A) ||$, and is by the extensionality axiom 
equivalent to $x=_{\rpw_n(A)} y$. It is now easy prove the properties of the other
types by induction. 
\end{proof}

\medskip
To state the Functional Reducibility Axiom, which is the final axiom, we need to introduce some terminology. Inspired by the terminology in (Bell 1988) of the syntactic counterpart to toposes as {\em local set theories,}
we define a {\em local set} to be a 
type  $A$ together with an element $X$ of $\rpw_n(A)$, for some $n$. It is thus specified by a triple $(A,X,n)$, where 
$A$ is the underlying type, $X$ is the propositional function defining the subset of $A$ and $n$ the level
of the propositional function. A basic example is  the natural numbers as a local set  
given by 
$${\mathbb N} = (\nattype, \{ x: \nattype \; | \; \top \}, 0).$$
A {\em relation} $R$ between $(A,X,n)$ and $(B,Y,m)$ is some $R: \rpw_{k}(A \times B)$ such that
$$(\forall x:A)(\forall y:B)(\pair{x}{y} \rin R \Longrightarrow x \rin X \land y\rin Y).$$
Such a relation is {\em functional}  if
$$(\forall x:A)(\forall y, z:B)(\pair{x}{y} \rin R \land \pair{x}{z} \rin R \Longrightarrow y =_B z),$$
and is {\em total} if
$$(\forall x:A)(x \rin X \Rightarrow (\exists y: B)(y \rin Y \land \pair{x}{y} \rin R)).$$
A functional, total relation is simply called a {\em map}. 

\medskip
Now the central axiom is the following:

{\em Functional Reducibility Axiom:} For $A,B \in \rsymb$, $m,n \in {\mathbb N}$, we have for 
$k= \elevel{B} \inmax m \inmax n$ that for any $r \in {\mathbb N}$
\begin{eqnarray*}
\lefteqn{(\forall X : \rpw_m(A))(\forall Y: \rpw_n(B))(\forall F: \rpw_{r}(A \times B))} \\
&& \qquad \Bigl[\mbox{$F$ map from $(A,X,m)$ to $(B,Y,n)$} 
\Rightarrow \\
&& \qquad \qquad \qquad\quad (\exists G:  \rpw_{k}(A \times B))(\forall z:A \times B)(z \rin F \Leftrightarrow z \rin G)\Bigr]
\end{eqnarray*}

Note: $G$ is necessarily unique by the extensionality axiom.

\medskip
These are the axioms of the basic theory IRTT. To make the system useful for developing Bishop style constructive analysis, we may also extend it by the {\em Relativized Dependent Choice} (RDC) axiom scheme, which is  the following: 

\medskip
{\em RDC:} Let $A$ be any sort and $m,n \ge 0$. Then we have the axiom: for any $D : \rpw_m(A)$, any 
$R : \rpw_n(A \times A)$, and any $a:A$ satisfying
$$ a \rin D \land (\forall x:A)(x \rin D \Rightarrow (\exists y:A)(y \rin D \land \langle x,y \rangle \rin R))$$
there is $F: \rpw_{k}(\nattype \times A)$ a map from ${\mathbb N}$ to $(A,D,m)$, satisfying
\begin{itemize}
\item[(a)] $\langle 0,a \rangle \rin F$,
\item[(b)] $(\forall i:\nattype)(\forall y,z: A)(\langle i,y \rangle \rin F \land 
\langle i+1,z \rangle \rin F  \Rightarrow \langle y,z \rangle \rin R)$.
\end{itemize}
Here $k= \rlevel{A}$.

\begin{remark} \label{russelltypes}
{\em The ramified types of Russell are --- according to the modern
elaboration of Laan and Nederpelt (1996) --- given by the following inductive
definition. Each ramified type has the form $t^n$ where $n$ is a natural number
indicating the order of the type. These are generated as
follows
\begin{itemize}
\item[(a)] $0^0$ is a ramified type (the type of individuals)
\item[(b)] if $t_1^{n_1}, \ldots, t_k^{n_k}$ are ramified types, and $m> n_1,\ldots, n_k$, then
$(t_1^{n_1}, \ldots, t_k^{n_k})^m$ is a ramified type.
\end{itemize}
A ramified type is {\em minimal} (or {\em predicative}) if in each application of (b) in its construction, one takes
$m=1+\max(n_1,\ldots,n_k)$. The reducibility axiom then states that an element of 
a type $t^n$ is extensional equivalent  to some element of a corresponding minimal
type (Kamareddine {\em et al.} 2002, p.\ 233).

These ramified types can be interpreted into the types  $\rsymb$ as follows, assuming the type individuals is interpreted as the type of natural numbers:
\begin{eqnarray*}
\rint{0^0} & = & \nattype \\
\rint{(t_1^{n_1}, \ldots, t_k^{n_k})^m}  & = & 
\rpw_m(\rint{t_1^{n_1}} \times \cdots \times \rint{t_k^{n_k}})\\
\end{eqnarray*}
Here $A_1 \times \cdots \times A_k = ( \cdots (A_1 \times A_2) \times \cdots )\times A_k$, which in case $k=0$ is just the unit type $\unittype$.  The types of $\rsymb$ are thus richer than Russell's,
but still have a predicative interpretation as is demonstrated in the following sections.

}
\end{remark}

%   SETOIDS
%
%
%
%

\section{Setoids}
As interpreting theory we consider Martin-L{\"o}f type theory with an
infinite sequence of universes ${\rm U}_0, {\rm U}_1, {\rm U}_2,
\ldots$. Each universe ${\rm U}_n$ is closed under the standard type
constructions $\Pi$, $\Sigma$, $+$ and ${\rm Id}$. ${\rm U}_0$ contains basic
types such as the type $N$ of natural numbers, empty type and unit
type. Moreover if $A:{\rm U}_n$ then $A$ is a type and $A: {\rm
U}_{n+1}$. Finally ${\rm U}_n: {\rm U}_{n+1}$. This is as presented in
(Martin-L{\"o}f 1984) although we assume that the identity type ${\rm Id}$ is
intensional instead of extensional. This theory is considered predicative
in the strict sense of Feferman and Sch{\"u}tte and its proof-theoretic
ordinal is $\Gamma_0$ (Feferman 1982)
On the propositions-as-types interpretation, the universe $U_n$ can be regarded as the
type of propositions of level $n$.  

A setoid {\em $A=(\typeof{A},\eqof{A})$ is of index $(m,n)$} if
$\typeof{A}: {\rm U}_m$ and $\eqof{A}: \typeof{A} \to \typeof{A} \to
{\rm U}_n$. We also say that $A$ is an {\em $(m,n)$-setoid.} 
Since any type of ${\rm U}_k$ is a type of ${\rm U}_{k'}$ for any
$k' \ge k$, it follows that any $(m,n)$-setoid is an $(m',n')$-setoid
whenever $m' \ge m$ and $n' \ge n$.

 $A$ is
said to be an {\em $n$-setoid} if it is an $(n,n)$-setoid. It is an
{\em $n$-classoid} if it is an $(n+1,n)$-setoid.

\begin{examples} 

{\em
\begin{itemize}
\item[]
\item[(a)] $(N,{\rm Id}(N,\cdot,\cdot))$ is a 0-setoid.
\item[(b)] Aczel's model of CZF $(V,=_V)$ is a 0-classoid if $V$ is built from
the universe $U_0$.
\item[(c)]  $\Omega_n = (U_n, \leftrightarrow)$ is an $n$-classoid, when
$\leftrightarrow$ is logical equivalence.
\end{itemize}}
\end{examples}

For setoids $A$ and $B$, the product construction $A \times B$ is provided
by 
$$|A \times B| = |A| \times |B|$$
and
$$(x,y) =_{A \times B} (u,v) \Longleftrightarrow (x=_A u) \land (y=_B v).$$
Whereas the exponent construction $B^A$ is given
by
$$|B^A| =_{\rm def} (\Sigma f: |A| \to |B|)(\forall x,y: |A|)(x=_A y \Rightarrow f(x) =_B f(y))$$
and
$$(f,p) =_{B^A} (g,q) \Longleftrightarrow_{\rm def} (\forall x: |A|)(f(x)=_B g(x))$$
Thus an element $g=(|g|,{\rm ext}_g)$ of $|B^A|$ consists of a
function $|g|$ and a proof ${\rm ext}_g$ of its extensionality.

\begin{lemma} \label{exprule}
If $A$ is an $(m,n)$-setoid and $B$ is a $(k,\ell)$-setoid then
$A \times B$ is a $(m \inmax k, n \inmax \ell)$-setoid and 
$B^A$ is a $(m \inmax n \inmax k \inmax \ell,  m \inmax \ell)$-setoid. $\qed$
\end{lemma}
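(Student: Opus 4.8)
The plan is to read off, for each of the two constructions, the universe of the underlying type and the universe of the equality family directly from the closure clauses of the universes under $\Sigma$, $\Pi$, $+$, ${\rm Id}$, together with the cumulativity fact already recorded above: a type of $U_i$ is a type of $U_j$ whenever $j \ge i$. No induction is needed; the whole lemma is a piece of level-bookkeeping.

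For the product, by hypothesis $|A| : U_m$ and $|B| : U_k$, so by cumulativity both are types of $U_{m \lor k}$, and since that universe is closed under $\Sigma$ — hence under the non-dependent product $|A| \times |B|$ — we get $|A \times B| : U_{m \lor k}$. For the equality, $x =_A u : U_n$ and $y =_B v : U_\ell$, so after raising both to $U_{n \lor \ell}$ and using closure under conjunction (which is $\times$), $(x,y) =_{A \times B}(u,v) : U_{n \lor \ell}$. Thus $A \times B$ is an $(m \lor k, n \lor \ell)$-setoid. For the exponent I would peel the definition of $|B^A|$ from the inside out. The implication $x =_A y \Rightarrow f(x) =_B f(y)$ has premise in $U_n$ and conclusion in $U_\ell$, hence is a type of $U_{n \lor \ell}$; the two nested universal quantifiers $(\forall x,y : |A|)$ are $\Pi$-types with domain $|A| : U_m$ and body in $U_{n \lor \ell}$, hence land in $U_{m \lor n \lor \ell}$. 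The first $\Sigma$-component $|A| \to |B|$ is a $\Pi$-type with domain in $U_m$ and codomain in $U_k$, so it is a type of $U_{m \lor k}$. Forming the $\Sigma$ of the extensionality family over it gives $|B^A| : U_{m \lor k \lor n \lor \ell} = U_{m \lor n \lor k \lor \ell}$. Finally, the equality $(f,p) =_{B^A}(g,q)$ unfolds to $\Pi x : |A|.\,(f(x) =_B g(x))$, a $\Pi$-type with domain in $U_m$ and body in $U_\ell$, hence a type of $U_{m \lor \ell}$. This yields the claimed index $(m \lor n \lor k \lor \ell,\, m \lor \ell)$ for $B^A$.

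The only point needing care — the nearest thing to an obstacle — is the extra join forced by $\Pi$-formation: since the rule requires domain and codomain to lie in a common universe, the level $m$ of $|A|$ must be included even when the fibre of the family is "small", which is precisely why $m$ occurs in both indices of $B^A$ and why this bound cannot be improved in general. Everything else is a mechanical pass through the closure clauses and cumulativity, so I would present the argument exactly as the short computation sketched above, omitting the fully routine verifications.
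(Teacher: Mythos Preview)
Your proposal is correct and matches the paper's intent: the lemma is stated with a bare $\qed$ in the paper, so the argument is left as a routine universe-level computation, and you have simply spelled out that computation accurately. There is nothing to compare --- your write-up is exactly the omitted verification.
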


\medskip
To simplify notation in the sequel, we usually write $x:A$ for $x:|A|$
when $A$ is a setoid. Moreover, we write $g(x)$ for $|g|(x)$ when $g:[A
  \to B]$.
We have the principle of unique choice that gives a
one-to-one correspondence between functions and total, functional relations:

\begin{thm} Let $A$ and $B$ be setoids.
Suppose that $R(x,y)$ is an extensional property depending on $x:A$,
$y:B$.
$$(\forall x:A)(\exists ! y:B)R(x,y) \Longrightarrow 
(\exists !f:B^A)(\forall x:A)R(x,f(x)). \; \qed$$
\end{thm}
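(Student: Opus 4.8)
The plan is to read the hypothesis through the propositions-as-types correspondence and simply read off the function together with its extensionality certificate, with the uniqueness clause of $\exists!$ doing the essential work. Unfolding notation, $\exists!\,y{:}B.\,R(x,y)$ denotes the type $(\Sigma y{:}|B|)\bigl(R(x,y) \times (\Pi y'{:}|B|)(R(x,y') \to y =_B y')\bigr)$, so the assumption $(\forall x:A)(\exists!y:B)R(x,y)$ supplies a term
$$H : (\Pi x{:}|A|)(\Sigma y{:}|B|)\bigl(R(x,y) \times (\Pi y'{:}|B|)(R(x,y') \to y =_B y')\bigr).$$
Put $|f|(x) =_{\rm def} \pi_1(H(x))$, giving a raw function $|f| : |A| \to |B|$, and write $\rho_x : R(x,|f|(x))$ and $u_x : (\Pi y'{:}|B|)(R(x,y') \to |f|(x) =_B y')$ for the two components of $\pi_2(H(x))$.

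First I would check that $|f|$ respects the setoid equalities, which is the one step that is not mere bookkeeping. Given $p : x =_A x'$, extensionality of the property $R$ --- applied to $\rho_x$, to $p$, and to reflexivity of $=_B$ at $|f|(x)$ --- yields $R(x',|f|(x))$; feeding this into $u_{x'}$ produces $|f|(x') =_B |f|(x)$, and symmetry of $=_B$ gives $|f|(x) =_B |f|(x')$. This is a term ${\rm ext}_f$, so $f =_{\rm def} (|f|, {\rm ext}_f) : |B^A|$. Then $(\forall x:A)R(x,f(x))$ holds, witnessed by $\lambda x.\rho_x$.

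For uniqueness in $B^A$, suppose $g : |B^A|$ also satisfies $(\forall x:A)R(x,g(x))$, witnessed by $q_x : R(x,|g|(x))$. For each $x:|A|$ we have $u_x(q_x) : |f|(x) =_B |g|(x)$, so $\lambda x.\,u_x(q_x)$ is precisely a proof of $f =_{B^A} g$ by the definition of $=_{B^A}$. Packaging the function $f$, the witness $\lambda x.\rho_x$, and this uniqueness proof gives the required element of $(\exists!f:B^A)(\forall x:A)R(x,f(x))$.

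The only genuine obstacle is the extensionality step, and it is worth emphasising why it cannot be omitted: for a merely total relation the naive choice map $\lambda x.\pi_1(H(x))$ need not be extensional, and it is exactly the uniqueness clause of $\exists!$, in combination with the standing assumption that $R$ is an extensional property, that repairs this. The remaining content is just the distributivity of $\Pi$ over $\Sigma$ available in Martin-L{\"o}f type theory, which involves no choice beyond function application.
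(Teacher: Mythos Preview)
Your argument is correct and is the standard proof of unique choice for setoids in Martin-L{\"o}f type theory: extract the witness via $\Pi/\Sigma$-distributivity, use the uniqueness clause together with extensionality of $R$ to establish that the raw function respects $=_A$ and $=_B$, and use uniqueness again for the final clause. The one notational slip is writing $u_x(q_x)$ where strictly $u_x$ takes the element first, i.e.\ $u_x(|g|(x))(q_x)$, but this is cosmetic.

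As to comparison: the paper gives no proof of this theorem at all --- it is stated with a terminal $\qed$ and treated as a well-known background fact about setoids. So there is nothing to compare your approach against; you have supplied exactly the standard argument the paper suppresses.
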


\medskip
For any setoid $A$, note that an element 
$R=(|R|, {\rm ext}_R) : [A \to \Omega_n]$ 
consists of a predicate $|R|: |A| \to U_n$ and 
proof of its extensionality,  i.e.\ that if for all $x,y: |A|$
$$x=_A y \Longrightarrow (R(x) \Leftrightarrow R(y)).$$
We call this an {\em extensional propositional function on $A$ of level $n$.}
The set $[A \to \Omega_n]$ will be the interpretation of $\rpw_n(A)$.

We finally recall that the following form of dependent choice is valid for setoids.

\begin{thm} \label{ttrdc}
Let $A$ be a setoid and let $R$ be an extensional predicate on $A \times A$.
Suppose that $(\forall x:A)(\exists y:A)R(\langle x,y \rangle)$.
Then for any $x : A$ there is $f: [{\mathbb N} \to A]$ so that $f(0)=_A x$ and for all $n: {\mathbb N}$: 
$$R(\langle f(n),f(n+1) \rangle). \; \qed$$
\end{thm}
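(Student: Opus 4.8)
The plan is to reduce the claim to primitive recursion together with the distributivity of $\Sigma$ over $\Pi$ that is built into Martin-L\"of type theory, using that in the setoid interpretation $\exists$ is modelled by $\Sigma$ (untruncated), so that no choice principle beyond the ambient type theory is needed.

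First I would unfold the hypothesis. In the interpretation, $(\forall x:A)(\exists y:A)R(\langle x,y\rangle)$ is a term of $(\Pi x:|A|)(\Sigma y:|A|)|R|(\langle x,y\rangle)$. Applying the type-theoretic axiom of choice (the canonical map $(\Pi x)(\Sigma y)\,C(x,y) \to (\Sigma h)(\Pi x)\,C(x,h(x))$), I obtain a bare function $h:|A|\to|A|$ and a proof $p:(\Pi x:|A|)|R|(\langle x,h(x)\rangle)$; note that $h$ is not claimed to be extensional, which will not matter below. Fixing $x_0:|A|$, I then define $g:N\to|A|$ by primitive recursion: $g(0):=x_0$ and $g(n+1):=h(g(n))$.

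Next I would promote $g$ to an element of $[{\mathbb N}\to A]$. This requires an extensionality witness ${\rm ext}_g$ establishing ${\rm Id}(N,n,m)\Rightarrow g(n)=_A g(m)$; since ${\mathbb N}$ carries the identity equality, this is immediate by ${\rm Id}$-elimination (transport) and reflexivity of $=_A$. Write $f=(g,{\rm ext}_g)$. Then I would verify the two conclusions: $f(0)=g(0)=x_0$ holds definitionally, hence $f(0)=_A x_0$ by reflexivity of $=_A$; and $R(\langle f(n),f(n+1)\rangle)$ unfolds to $|R|(\langle g(n),h(g(n))\rangle)$, which is inhabited by $p(g(n))$.

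The only subtlety — a matter of bookkeeping rather than a genuine obstacle — is keeping the setoid-model reading of $\exists$ straight and checking that the recursively defined iterate is extensional; both reduce to standard facts, namely $\Pi\Sigma$-distribution and transport along ${\rm Id}$. In particular the extensionality hypothesis on $R$ is not actually used, and the construction is fully constructive.
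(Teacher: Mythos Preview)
The paper does not actually give a proof of this theorem: it is introduced with ``We finally recall that the following form of dependent choice is valid for setoids'' and the $\qed$ is placed immediately after the statement. Your argument is correct and is exactly the standard justification one would supply here: the type-theoretic axiom of choice (distributivity of $\Sigma$ over $\Pi$) extracts a raw successor map $h:|A|\to|A|$ together with a witness $p$, primitive recursion on $N$ iterates $h$ from the given start value, and extensionality of the resulting sequence with respect to the identity equality on $N$ is immediate by ${\rm Id}$-elimination. Your observation that the extensionality of $R$ is not needed for the construction is also correct.
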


%   MODEL
%
%
%
%

\section{A model of ramified type theory}

The type symbols of $\rsymb$ interpret naturally as an extensional hierarchy of setoids in the background theory.
Define setoids $\ipret{S}$ by recursion on the structure of $S \in \rsymb$.
\begin{eqnarray*}
\ipret{{\bf 1}} &= & ({\rm N}_1, {\rm Id}({\rm N}_1,\cdot,\cdot)) \\
\ipret{{\bf N}} & = & ({\rm N}, {\rm Id}({\rm N},\cdot,\cdot)) \\
\ipret{(S \times T)} & = &\ipret{S} \times \ipret{T} \\
\ipret{\rpw_k(S) } & =  & [{\ipret{S}} \to {\Omega_k}].  \\
\end{eqnarray*}

\begin{lemma} \label{lvlemma}
 If $S \in \rsymb$ , then $\ipret{S}$ is an $(\rlevel{S},\elevel{S})$-setoid
 and $\rlevel{S} \ge \elevel{S}$.
\end{lemma}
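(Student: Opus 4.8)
The proof will proceed by structural induction on $S \in \rsymb$, in parallel establishing both that $\ipret{S}$ is an $(\rlevel{S}, \elevel{S})$-setoid and the inequality $\rlevel{S} \ge \elevel{S}$.

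For the base cases $S = \unittype$ and $S = \nattype$, we have $\ipret{S}$ given by $({\rm N}_1, {\rm Id})$ or $({\rm N}, {\rm Id})$, both of which live in $U_0$ and have identity-type equality valued in $U_0$; hence they are $(0,0)$-setoids, matching $\rlevel{S} = \elevel{S} = 0$, and the inequality is trivial. For the product case $S = A \times B$, the inductive hypotheses give that $\ipret{A}$ is an $(\rlevel{A}, \elevel{A})$-setoid and $\ipret{B}$ is an $(\rlevel{B}, \elevel{B})$-setoid; Lemma~\ref{exprule} then tells us that $\ipret{A} \times \ipret{B}$ is an $(\rlevel{A} \inmax \rlevel{B}, \elevel{A} \inmax \elevel{B})$-setoid, and by the recursive definitions of $\rlevel{\cdot}$ and $\elevel{\cdot}$ these indices are exactly $\rlevel{A \times B}$ and $\elevel{A \times B}$. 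The inequality follows from the inductive hypotheses since the maximum of two pairs each satisfying $\ge$ again satisfies $\ge$.

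The main case, and the place where the bookkeeping actually matters, is $S = \rpw_k(A)$, where $\ipret{S} = [\ipret{A} \to \Omega_k]$. Here I would first record that $\Omega_k = (U_k, \leftrightarrow)$ is a $k$-classoid, i.e.\ a $(k+1, k)$-setoid. Writing $\ipret{A}$ as an $(\rlevel{A}, \elevel{A})$-setoid by the inductive hypothesis, I apply the exponent clause of Lemma~\ref{exprule} with $(m,n) = (\rlevel{A}, \elevel{A})$ and $(k,\ell) = (k+1, k)$: the exponent is then a $\bigl(\rlevel{A} \inmax \elevel{A} \inmax (k+1) \inmax k,\ \rlevel{A} \inmax k\bigr)$-setoid. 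Using the inductive inequality $\rlevel{A} \ge \elevel{A}$, the first index simplifies to $\rlevel{A} \inmax (k+1) = \max(k+1, \rlevel{A}) = \rlevel{\rpw_k(A)}$, and the second index is $\max(k, \rlevel{A})$. By definition $\elevel{\rpw_k(A)} = \max(k, \rlevel{A})$, so the second index is exactly $\elevel{S}$. Finally the required inequality $\rlevel{S} \ge \elevel{S}$ amounts to $\max(k+1, \rlevel{A}) \ge \max(k, \rlevel{A})$, which is immediate.

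The only genuine obstacle is ensuring the arithmetic of the $\inmax$ (i.e.\ $\max$) operations in Lemma~\ref{exprule} collapses precisely to the definitions of $\rlevel{\cdot}$ and $\elevel{\cdot}$; this is where carrying the auxiliary inequality $\rlevel{A} \ge \elevel{A}$ through the induction is essential, since without it the first exponent index would not visibly reduce to $\max(k+1, \rlevel{A})$. Everything else is a routine unwinding of definitions, so I would present the product and base cases briefly and spend the bulk of the written proof on the power-type step.
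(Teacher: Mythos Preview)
Your proposal is correct and follows essentially the same route as the paper's own proof: structural induction on $S$, with the base and product cases handled directly via Lemma~\ref{exprule}, and the power-type case $\rpw_k(A)$ treated by applying the exponent clause of Lemma~\ref{exprule} to $\ipret{A}$ and the $(k+1,k)$-setoid $\Omega_k$, then simplifying the resulting index using the inductive inequality $\rlevel{A}\ge\elevel{A}$. Your emphasis on why that auxiliary inequality must be carried through the induction matches exactly the point the paper singles out.
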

\begin{proof}  By induction on the structure of $S$. If  $S$ is ${\bf 1}$ or ${\bf N}$, then $\ipret{S}$
is an $(0,0)$-setoid,  and $\rlevel{S}=\elevel{S}=0$. If $S= S_1 \times S_2$, then
 $\rlevel{S}= \rlevel{S_1} \inmax \rlevel{S_2}$ and $\elevel{S}= \elevel{S_1} \inmax \elevel{S_2}$. Now $\ipret{S} = \ipret{S_1} \times \ipret{S_2}$. By Lemma  
 \ref{exprule} , and the induction hypothesis, we have that $\ipret{S_1} \times \ipret{S_2}$,
 is a $(\rlevel{S_1} \inmax \rlevel{S_2}, \elevel{S_1} \inmax \elevel{S_2})$-setoid,
 and thus by definition of levels, an $(\rlevel{S}, \elevel{S})$-setoid.  By inductive hypothesis also $\rlevel{S} \ge \elevel{S}$. For
 the case $\rlevel{S}= \rpw_n(A)$, we have that 
 $$\ipret{S} =[{\ipret{A}} \to \Omega_n].$$
Now by inductive hypothesis $\ipret{A}$ is an $(\rlevel{A},\elevel{A})$-setoid.
Also $\Omega_n$ is an $(n+1,n)$-setoid. Thus by Lemma \ref{exprule},  $\ipret{S}$
is a setoid of index
 $$(\rlevel{A} \inmax \elevel{A}   \inmax  (n+1) \inmax n, \rlevel{A} \inmax n) = 
 ((n+1) \inmax \rlevel{A}, n \inmax \rlevel{A} ) = (\rlevel{S}, \elevel{S}).$$
 Here we have used the inductive hypothesis $\rlevel{A} \ge \rlevel{A}$. 
 Clearly we have $\rlevel{S} \ge \elevel{S}$. 
 \end{proof}

The interpretation $\ipret{(-)}$ is now extended in the standard fashion
for propositions-as-types interpretations of many-sorted
intuitionistic logic (cf.\ Martin-L{\"o}f 1998). Each formula $\vf$ is
interpreted
as a type $\ipret{\vf}$. Each term $a$ of sort $A$ is interpreted as 
an element $\ipret{a}$ of type $|\ipret{A}|$. Moreover,  if $\vf$ is in 
$\formulas{n}$ we shall require that $\ipret{\vf} : {\rm U}_n$.

Each variable $x$ of sort $A$ is interpreted as a variable $\ipret{x}$ of type $|\ipret{A}|$.
All the terms associated with the sorts $\unittype$, $\nattype$, $A \times B$ are interpreted
in the obvious way.  Logical constants are interpreted as the corresponding type constructions
in the familiar way.
E.g.\ for quantifiers we define
$$\ipret{((\forall x:A)\vf)} = (\Sigma \ipret{x} : |\ipret{A}|)\ipret{\vf} \qquad
\ipret{((\exists x:A)\vf)} = (\Pi \ipret{x} : |\ipret{A}|)\ipret{\vf}
$$
Then for atomic formulas define
$$ \ipret{(a =_A b)} = (\ipret{a} =_{\ipret{A}} \ipret{b}) \qquad 
\ipret{(c \rin d)} = |\ipret{d}|(\ipret{c}) 
 $$
where $a$ and $b$ are terms of sort $A$, and $c$ is a term of sort $B$ and $d$ a term of sort $\rpw_k(B)$.
If $\vf \in \formulas{n}$ and $x$ is variable of sort $A$, then define $\ipret{\{ x: A \; | \; \vf \}} = 
(\lambda \ipret{x}. \ipret{\vf}, e)$ where $e$ is a proof object for the extensionality of 
$\lambda \ipret{x}. \ipret{\vf}: |\ipret{A}| \to {\rm U}_n$.

\begin{lemma} For $\vf \in \formulas{n}$, the interpretation satisfies $\ipret{\vf} : {\rm U}_n$.
\end{lemma}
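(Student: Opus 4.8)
The plan is to proceed by induction on the structure of the formula $\vf \in \formulas{n}$, following the inductive clauses that generate $\formulas{k}$, and to check in each case that the type $\ipret{\vf}$ assigned by the interpretation lands in ${\rm U}_n$. The key background facts I will lean on are Lemma \ref{lvlemma} (that $\ipret{S}$ is an $(\rlevel{S},\elevel{S})$-setoid, with $\rlevel{S}\ge\elevel{S}$), the closure of each ${\rm U}_n$ under $\Pi$, $\Sigma$, $+$, and ${\rm Id}$, the cumulativity $A:{\rm U}_n \Rightarrow A:{\rm U}_{n+1}$, and $\Omega_n = (U_n,\leftrightarrow)$ being an $n$-classoid, i.e.\ $U_n:{\rm U}_{n+1}$ and $(\leftrightarrow):U_n\to U_n\to {\rm U}_n$. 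I will also use the monotonicity $\formulas{0}\subseteq\formulas{1}\subseteq\cdots$ together with cumulativity so that it suffices to treat, for each formula, the \emph{smallest} $n$ with $\vf\in\formulas{n}$; membership in larger ${\rm U}_n$ then follows automatically.

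First I would handle the atomic cases. For $(a=_A b)\in\formulas{k}$ we have $\rlevel{A}\le k$ by the formation rule, and $\ipret{(a=_Ab)} = (\ipret{a}=_{\ipret{A}}\ipret{b})$; since $\ipret{A}$ is an $(\rlevel{A},\elevel{A})$-setoid, its equality lands in ${\rm U}_{\elevel{A}}$, and $\elevel{A}\le\rlevel{A}\le k$, so cumulativity gives $\ipret{(a=_Ab)}:{\rm U}_k$. For $(a\rin b)\in\formulas{k}$ with $b$ of sort $\rpw_n(A)$ and $k\ge n$, we have $\ipret{b}:|{\ipret{\rpw_n(A)}}| = |[\ipret{A}\to\Omega_n]|$, so $|\ipret{b}|:|\ipret{A}|\to U_n$ and hence $\ipret{(a\rin b)} = |\ipret{b}|(\ipret{a}):U_n:{\rm U}_{n+1}$; but in fact we need ${\rm U}_k$ with $k\ge n$, and since $|\ipret{b}|(\ipret{a})$ is a \emph{type in} $U_n$ it is itself a type of ${\rm U}_n$ (this is exactly the ``$A:{\rm U}_n\Rightarrow A$ is a type and $A:{\rm U}_{n+1}$'' clause, applied one level down: an inhabitant of $U_n$ is a type in ${\rm U}_n$), hence in ${\rm U}_k$ by cumulativity. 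Finally $\ipret{\bot} = {\rm N}_0 : {\rm U}_0$, hence in every ${\rm U}_k$.

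For the connectives, if $\vf,\psi\in\formulas{k}$ then by induction $\ipret{\vf},\ipret{\psi}:{\rm U}_k$, and $\ipret{\vf\lor\psi} = \ipret{\vf}+\ipret{\psi}$, $\ipret{\vf\land\psi} = (\Sigma\,\_:\ipret{\vf})\ipret{\psi}$, $\ipret{\vf\Rightarrow\psi} = (\Pi\,\_:\ipret{\vf})\ipret{\psi}$, all in ${\rm U}_k$ by closure under $+$, $\Sigma$, $\Pi$. For the quantifiers, $(\forall x:A)\vf,(\exists x:A)\vf\in\formulas{k}$ requires $\rlevel{A}\le k$ and $\vf\in\formulas{k}$; by Lemma \ref{lvlemma}, $|\ipret{A}|:{\rm U}_{\rlevel{A}}\subseteq{\rm U}_k$, and by induction $\ipret{\vf}:{\rm U}_k$, so $\ipret{(\exists x:A)\vf} = (\Pi\ipret{x}:|\ipret{A}|)\ipret{\vf}:{\rm U}_k$ and $\ipret{(\forall x:A)\vf} = (\Sigma\ipret{x}:|\ipret{A}|)\ipret{\vf}:{\rm U}_k$, again by closure of ${\rm U}_k$ under $\Pi$ and $\Sigma$. (Note the interpretation swaps the usual roles, per the displayed definition in the text; this does not affect the universe bookkeeping.)

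I do not expect a serious obstacle; the proof is a bureaucratic induction whose only delicate point is the atomic membership case $(a\rin b)$, where one must be careful to read $|\ipret{b}|(\ipret{a})$ as an \emph{element of the universe} $U_n$ and therefore as a type belonging to ${\rm U}_n$ — conflating ``element of $U_n$'' with ``element of ${\rm U}_n$'' is the one place where sloppiness would break the argument. A secondary point worth a sentence is that the side conditions in the formation rules ($\rlevel{A}\le k$ for $=_A$ and for quantifiers, $k\ge n$ for $\rin$) are precisely what is needed to invoke Lemma \ref{lvlemma} and cumulativity; the lemma's bound $\rlevel{S}\ge\elevel{S}$ is used exactly once, to absorb the equality-type of $\ipret{A}$ into ${\rm U}_{\rlevel{A}}$ in the $=_A$ case. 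Everything else is immediate from closure of the universes under the type constructors.
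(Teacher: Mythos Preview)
Your proof is correct and follows essentially the same approach as the paper's own proof: a structural induction on formulas, checking that each formation clause (equality, membership, $\bot$, connectives, quantifiers) lands in the right universe via Lemma~\ref{lvlemma} and cumulativity. The paper only sketches three ``interesting'' cases; you are more thorough. One small remark: your careful distinction between ``element of $U_n$'' and ``element of ${\rm U}_n$'' in the $\rin$-case is unnecessary here, since the paper treats $U_n$ and ${\rm U}_n$ as the same Russell-style universe (so $|\ipret{b}|(\ipret{a}):U_n$ already \emph{is} the statement $|\ipret{b}|(\ipret{a}):{\rm U}_n$), but this over-caution does no harm to the argument.
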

\begin{proof} By induction on the build-up of formulas. We do some interesting cases: 

$\vf = (a =_A b) $:  if $\rlevel{A} \le n$, then since $\ipret{A}$ is an $n$-setoid, 
$\ipret{\vf} : {\rm U}_n$.

$\vf = (c \rin d) $: if $d$ has sort $\rpw_k(A)$ where $k \le n$, then  $\ipret{\vf} = |\ipret{d}|(\ipret{c}) : {\rm U}_k$.
But $k \le n$, so indeed $\ipret{\vf} : U_n$.

$\vf = (\forall x:A)\psi$: if  $\rlevel{A} < n$, $\psi \in \formulas{n}$, then $\ipret{\vf} = 
(\Pi \ipret{x}: |\ipret{A}|)\ipret{\psi} \in {\rm U}_n $, since $\ipret{A}$ is an $n$-setoid and
$\ipret{\psi} \in {\rm U}_n$ according to the inductive hypothesis.

\end{proof}

\medskip

Next we consider the semantic version of a local set.
A pair $M = (S_M,\chi_M)$ consisting of $S_M$ an $(m,n)$-setoid and a propositional
function  $\chi_M \in [S_M \to \Omega_k]$ is called a {\em local set.} It
gives rise to a setoid
$$\hat{M}= ((\Sigma x: S_M)\chi_M(x), =')$$
where 
$$(x,p) =' (y,q) \Longleftrightarrow_{\rm def} x=_{S_M} y.$$
This setoid has index $(m \inmax k,n)$.

For two local sets $M$ and $K$, we call $F\in [S_M \times S_K \to
  \Omega_r]$ a {\em map from $M$ to $K$} if
$$(\forall x:S_M)(\forall y:S_K)(F(\pair{x}{y}) \Longrightarrow \chi_M(x) 
\land \chi_K(y))$$
and 
$$(\forall x:S_M)(\chi_M(x) \Rightarrow 
(\exists ! y: S_K)(\chi_K(y)  \land F(\pair{x}{y}))).$$

The following theorem implies the validity of the Functional Reducibility
Axiom under the interpretation. 

\begin{thm}[Functional reducibility] \label{funred} Let $A$ be an $(m,n)$-setoid 
and $B$ be an $(m',n')$-setoid. Suppose $X : [A \to \Omega_k]$
and $Y : [B \to \Omega_{k'}]$. Let $r \in {\mathbb N}$. 
Then for every map $F: [A\times B \to \Omega_r]$ from $(A,X)$ to
$(B,Y)$ 
 there is 
$G :[A\times B \to \Omega_{\ell}]$  such that for all $a:A$ and
$b: B$
$$F(\pair{a}{b})\Leftrightarrow G(\pair{a}{b}).$$
Here $\ell= k \inmax k' \inmax n'$.
\end{thm}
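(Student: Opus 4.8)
The plan is to use that, semantically, a map is essentially an honest function: from the map $F$ we first extract an extensional function $g$ by the principle of unique choice stated above, and then we \emph{rebuild} the graph of $F$ using $g$ together with the equality $=_B$ of $B$ (which lives at level $n'$) rather than using $F$ itself (which lives at the possibly much higher level $r$). This substitution of $=_B$ for $F$ is exactly what brings the level of the reconstructed graph down to $\ell = k \inmax k' \inmax n'$.

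In detail, form the setoid $\hat{M} = ((\Sigma x : |A|)\,X(x),\,=')$ attached to the local set $(A,X)$; by the index computation for local sets above (which rests on Lemma \ref{exprule}) it is an $(m \inmax k, n)$-setoid. Define a relation $R$ between $\hat M$ and $B$ by
$$R(\pair{x}{p}, y) \;\equiv\; Y(y) \land F(\pair{x}{y}),$$
which is extensional in both arguments because $X$, $Y$, $F$ are extensional and $=_{\hat M}$ only inspects the first component. The two clauses of ``$F$ is a map from $(A,X)$ to $(B,Y)$'' say exactly that for each $\pair{x}{p} : \hat M$ we have $X(x)$, hence $(\exists! y : B)\,R(\pair{x}{p}, y)$: existence is the totality clause, and uniqueness up to $=_B$ is the functionality clause (using that $F(\pair{x}{y})$ already forces $X(x) \land Y(y)$ via the first clause). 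Applying the unique choice principle to $\hat M$, $B$ and $R$ yields $g \in B^{\hat M}$, together with an extensionality proof, such that $(\forall \pair{x}{p} : \hat M)\,\bigl(Y(g(\pair{x}{p})) \land F(\pair{x}{g(\pair{x}{p})})\bigr)$; in particular $g(\pair{a}{p})$ is independent, up to $=_B$, of the choice of witness $p : X(a)$.

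Now put
$$G(\pair{a}{b}) \;\equiv\; (\Sigma p : X(a))\bigl(Y(b) \land g(\pair{a}{p}) =_B b\bigr).$$
Since $X(a) : {\rm U}_k$, $Y(b) : {\rm U}_{k'}$ and $g(\pair{a}{p}) =_B b : {\rm U}_{n'}$, this type lies in ${\rm U}_{k \inmax k' \inmax n'} = {\rm U}_{\ell}$, and, equipped with the routine extensionality proof built from extensionality of $X$, $Y$, $g$ and transitivity of $=_B$, it gives the desired $G : [A \times B \to \Omega_{\ell}]$. For the equivalence $F(\pair{a}{b}) \Leftrightarrow G(\pair{a}{b})$: if $G(\pair{a}{b})$ holds, take the witness $p : X(a)$; then $F(\pair{a}{g(\pair{a}{p})})$ holds and $g(\pair{a}{p}) =_B b$, so extensionality of $F$ in the second argument gives $F(\pair{a}{b})$. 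Conversely, if $F(\pair{a}{b})$ holds, the first map clause gives $X(a)$ — hence a witness $p$ — and $Y(b)$; then both $b$ and $g(\pair{a}{p})$ satisfy $Y(\cdot) \land F(\pair{a}{\cdot})$, so the uniqueness clause of the map hypothesis forces $g(\pair{a}{p}) =_B b$, and therefore $G(\pair{a}{b})$ holds.

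The only genuine content is the level accounting: one must keep $F$ (level $r$) out of the definition of $G$, which is precisely why the unique choice principle is invoked first to replace $F$ by the graph of a function. The step I expect to be most delicate is the converse direction of the final equivalence, where the \emph{functionality} (uniqueness) half of the map hypothesis is essential — totality only makes $g$ total, it is functionality that pins $g(\pair{a}{p})$ down to $b$. Everything else — the extensionality proofs, the index of $\hat M$ via Lemma \ref{exprule}, and the cumulativity ${\rm U}_j \subseteq {\rm U}_{j'}$ for $j \le j'$ — is routine bookkeeping already prepared in the preceding lemmas.
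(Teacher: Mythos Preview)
Your proof is correct and follows essentially the same route as the paper's: extract a function by unique choice on the $\Sigma$-setoid $\hat M=\widehat{(A,X)}$, then rebuild the graph using $=_B$, $X$, $Y$ in place of $F$, and verify the equivalence via strictness and functionality. The only cosmetic difference is that the paper takes the codomain of the extracted function to be $\widehat{(B,Y)}$ rather than $B$, and accordingly writes $G_f(\pair{a}{b})\equiv(\exists p{:}X(a))(\exists q{:}Y(b))\bigl(f(\langle a,p\rangle)=_V\langle b,q\rangle\bigr)$; unwinding $=_V$ this is exactly your $G$.
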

\begin{proof} Construct setoids $U=\widehat{(A,X)}$ and 
$V=\widehat{(B,Y)}$.
These are  $(m \inmax k,n)$- and  $(m' \inmax k',n')$-setoids, respectively. 
% The exponent $[U\to V]$ is thus a $(\max(m,n,k,m',n',k'),\max(m,n',k))$-setoid. 
For $f : [U\to V]$ define the
relation 
$$G_f(\pair{a}{b}) \equiv (\exists p:X(a))(\exists q:Y(b))(f(\langle a,p \rangle) =_V \langle b,q \rangle). $$
Since $x=_Vy$ is in $U_{n'}$,
we have that the righthand side is in $\Omega_{\ell}$ where $\ell= k \inmax k' \inmax n'$.
Suppose now that $r \in {\mathbb N}$ and that $F: [A\times B \to \Omega_r]$ is a map from $(A,X)$ to
$(B,Y)$. This implies that
$$(\forall x:A)(X(x) \Rightarrow 
(\exists ! y: B)(Y(y)  \land F(\pair{x}{y})))$$
Thus
$$(\forall u: U)(\exists! v:V)F(\pair{\pi_1(u)}{\pi_1(v)}).$$
By the axiom of unique choice there is a unique $f : [U\to V]$ so that
\begin{equation} \label{chosen}
(\forall u: U)F(\pair{\pi_1(u)}{\pi_1(f(u))}).
\end{equation}
Suppose that $F(\pair{a}{b})$ for $a: A$ and $b:B$. By strictness of $F$ there is $p:X(a)$.
Let $u= \langle a, p \rangle$. By (\ref{chosen}) we have $F(\pair{a}{\pi_1(f(u))})$. 
Thus $\pi_1(f(u)) =_B b$. Let $q= \pi_2(f(u))$. Thus $f(\langle a,p \rangle) =_V \langle b, q \rangle$,
i.e.\ $G_f(\pair{a}{b})$. Conversely, suppose $G_f(\pair{a}{b})$, so that 
$f(\langle a,p \rangle) =_V \langle b, q \rangle$ for some $p:X(a)$ and $q:Y(b)$. By (\ref{chosen}) 
we have $F(\pair{\pi_1(\langle a,p \rangle)}{\pi_1(f(\langle a,p \rangle))})$. Thus since $F$ is 
extensional, $F(\pair{a}{b})$. This proves $$F(\pair{a}{b}) \Leftrightarrow G_f(\pair{a}{b}).$$
\end{proof}

We verify the Functional Reducibility Axiom. Suppose  $A,B \in \rsymb$, $m,n \in {\mathbb N}$ and
let $k= \elevel{B} \inmax m \inmax n$. By Lemma \ref{lvlemma} $\ipret{B}$, 
is a $(\rlevel{B},\elevel{B})$-setoid.
 Suppose $\ipret{X} : \ipret{\rpw_m(A)}$,  $\ipret{Y} : \ipret{\rpw_n(B)}$,
$\ipret{F} : \ipret{\rpw_r(A\times B)}$ and that
$$\ipret{(\mbox{$F$ is a map from $(A,X,n)$ to $(B,Y,m)$})}$$
is true. This says that $\ipret{F}$ is a map from $(\ipret{A},\ipret{X})$ to $(\ipret{B},\ipret{Y})$. 
By Theorem \ref{funred} above we get for $\ell= m \inmax n \inmax \elevel{B}$ some
 $G :[\ipret{A}\times \ipret{B} \to \Omega_{\ell}]$  such that for all $a:\ipret{A}$ and
$b: \ipret{B}$
$$\ipret{F}(\pair{a}{b})\Leftrightarrow G(\pair{a}{b}).$$

The axiom RDC is verified  as follows. Let $A \in \rsymb$  and $m,n \ge 0$. Let $k= \rlevel{A}$. Suppose $\ipret{D} : \ipret{\rpw_m(A)}$,
$\ipret{R}: \ipret{\rpw_n(A \times A)}$ and $x: \ipret{A}$ are such that $\ipret{D}(x)$ and
\begin{equation} \label{totop}
\ipret{\bigl((\forall x:A)(x \rin D \Rightarrow (\exists y:A)(y \rin D \land \langle x,y \rangle \rin R))\bigr)}
\end{equation}
are true.
Setting $E = (\ipret{A}, \ipret{D})$ the assumption (\ref{totop}) implies that
$$(\forall u:\hat{E})(\exists v:\hat{E})\ipret{R}(\langle\pi_1(u),\pi_1(v)\rangle).$$
We have $t:\ipret{D}(x)$ for some $t$, so $w=\langle x, t \rangle: \hat{E}$. By Theorem \ref{ttrdc}
there is $f: [{\mathbb N} \to E]$ so that $f(0)=_E w$ and for all $i: {\mathbb N}$
$$\ipret{R}(\langle\pi_1(f(i)), \pi_1(f(i+1))\rangle).$$
Define 
$$F(\langle i, a \rangle) = (\pi_1(f(i))=_A a)$$
Now $F(\langle i, a\rangle)$ is in ${\rm U_k}$, so it is easy to see that $F$ satisfies the requirements.

We thus conclude:

\begin{thm} IRTT + RDC can be interpreted in Martin-L{\"o}f type theory with an infinite sequence of universes. $\qed$
 \end{thm}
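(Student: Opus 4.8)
The plan is to assemble the interpretation $\ipret{(-)}$ constructed in the previous two sections into a single soundness statement. All the components are already in place; what remains is to organize the verification.

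First I would record that the interpretation is well-defined and level-correct. By Lemma \ref{lvlemma} each sort $S \in \rsymb$ is sent to an $(\rlevel{S},\elevel{S})$-setoid $\ipret{S}$, so every term $a \in \terms{A}$ receives an interpretation $\ipret{a} : |\ipret{A}|$, and by the lemma showing $\ipret{\vf} : {\rm U}_n$ for $\vf \in \formulas{n}$ every formula of level $n$ is interpreted as a type in the universe ${\rm U}_n$. This last fact is exactly what makes the quantifier clauses legitimate: since $(\forall x:A)\vf, (\exists x:A)\vf \in \formulas{n}$ is only formed when $\rlevel{A} \le n$, in that case $\ipret{A}$ is an $n$-setoid and so $(\Pi \ipret{x}:|\ipret{A}|)\ipret{\vf}$ and $(\Sigma \ipret{x}:|\ipret{A}|)\ipret{\vf}$ lie in ${\rm U}_n$. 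One also needs the routine substitution lemma $\ipret{\vf[a/x]} \equiv \ipret{\vf}[\ipret{a}/\ipret{x}]$, proved by induction on $\vf$.

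Next I would check soundness of the underlying logic. Since IRTT is many-sorted intuitionistic predicate logic over the sorts $\rsymb$, a derivation of $\Gamma \vdash \vf$ translates, in the standard propositions-as-types manner (cf.\ Martin-L{\"o}f 1998), into a term of type $\ipret{\vf}$ depending on variables $\ipret{x_i}:|\ipret{A_i}|$ for the sort declarations in $\Gamma$ and on proof variables for the hypotheses; this is an induction on derivations using $\Pi$, $\Sigma$, $+$, ${\rm Id}$ and the empty type for $\bot$. It then remains to interpret the non-logical axioms. The equality axioms hold because each $\ipret{A}$ carries a setoid equality $\eqof{\ipret{A}}$ that is reflexive, symmetric and transitive, and all term-forming operations together with the predicate $\rin$ are interpreted by extensional operations. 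The axioms for $\unittype$ and $\times$ follow from the definition of the product setoid and the definitional $\beta\eta$-equalities for $\Sigma$-types, and the Peano axioms with induction follow from the type ${\rm N}:{\rm U}_0$ and its elimination rule. The extensionality axiom for $\rpw_k(A)$ is immediate from $\ipret{\rpw_k(A)} = [\ipret{A} \to \Omega_k]$, since by definition $(f,p) =_{[\ipret{A}\to\Omega_k]} (g,q)$ unfolds to $(\forall x:|\ipret{A}|)(f(x) \leftrightarrow g(x))$, which is exactly $(\forall z:A)(z \rin X \Leftrightarrow z \rin Y)$ after interpretation; and the defining axiom for restricted comprehension holds because $\ipret{z \rin \{x:A \mid \vf\}} = (\lambda \ipret{x}.\ipret{\vf})(\ipret{z})$ reduces to $\ipret{\vf}[\ipret{z}/\ipret{x}] = \ipret{\vf[z/x]}$ by the substitution lemma.

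The only remaining axioms are the two with genuine content, and these have already been handled above: the Functional Reducibility Axiom is validated by Theorem \ref{funred}, where one checks that with $X:\ipret{\rpw_m(A)}$ and $Y:\ipret{\rpw_n(B)}$ the level $\ell = m \inmax n \inmax \elevel{B}$ produced by that theorem coincides with the level $k = \elevel{B} \inmax m \inmax n$ demanded by the axiom, so that the witness $G:[\ipret{A}\times\ipret{B} \to \Omega_\ell]$ is indeed an element of $\ipret{\rpw_k(A\times B)}$; and RDC is validated by Theorem \ref{ttrdc} applied to the setoid $\widehat{(\ipret{A},\ipret{D})}$, the resulting sequence $f:[{\mathbb N} \to (\ipret{A},\ipret{D})]$ being turned into $F(\langle i,a\rangle) \equiv (\pi_1(f(i)) =_A a)$, which lies in ${\rm U}_{\rlevel{A}}$ as required. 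Putting these pieces together yields the interpretation of IRTT + RDC. The main obstacle throughout is the level bookkeeping --- ensuring that interpreted formulas of syntactic level $n$ really land in ${\rm U}_n$, and that the sets produced by reducibility and by dependent choice land in precisely the universe prescribed by the respective axiom --- rather than any new construction, which is why the proof reduces to collecting the earlier lemmas.
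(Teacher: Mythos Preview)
Your proposal is correct and follows essentially the same route as the paper: the theorem is stated as the conclusion of the preceding development, so the proof consists in assembling Lemma~\ref{lvlemma}, the level lemma for formulas, the verification of the Functional Reducibility Axiom via Theorem~\ref{funred}, and the verification of RDC via Theorem~\ref{ttrdc}, together with the routine soundness of the logic and the remaining axioms. Your write-up is in fact more explicit than the paper's about the logical and equality axioms, but the substantive content and the level bookkeeping match exactly.
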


\section{Constructions using local sets in {\bf IRTT}}

The system IRTT is not primarily intended to be practical for formalization, 
but  a theoretical exhibit
to clarify the relation between Russell's type theory and modern type theories. IRTT can straightforwardly be embedded into modern proof assistants based on Martin-L\"of type theory. The notation of IRTT is undeniably quite cumbersome to handle 
because of the levels associated to types, a property it inherits from Russell's system. To simplify
its use we can formulate the abstract properties of the local sets in category-theoretic terms
as in (Bell 1988).  

\medskip
\subsection*{The category of local sets}
%We consider two local sets $(A, X, m)$ and $(B, Y, n)$ in IRTT 
% as {\em extensionally equal} if $A=B$ (i.e.\ the type symbols are the same)
% and
% $$\{x:A \;|\; x \rin X\}=_{\rpw_{\max(m,n)}(A)}\{x:A \;|\; x \rin Y\}.$$
% By extensionality the latter equality holds if, and only if,
% $$(\forall x:A)(x \rin X \Longleftrightarrow x \rin Y).$$
% Note that if $m=n$, this is actually equivalent to $X=_{\rpw_m(A)} Y$.
The objects of the category are the local sets of IRTT.
A morphism between locals set ${\bf X}=(A, X, m)$ and  ${\bf Y}=(B, Y, n)$ is a pair
${\bf F}=(F, k)$ such that $F:\rpw_k(A\times B)$ is a map from ${\bf X}$ to 
${\bf Y}$. We write ${\bf F}: {\bf X} \to {\bf Y}$. Two such morphisms
$(F,k)$ and $(F',k')$ are equal if they have extensionally equal graphs:
$$(\forall x:A)(\forall y:B)((x, y) \rin F \Longleftrightarrow (x, y) \rin F').$$
Suppose that ${\bf F}: {\bf X} \to {\bf Y}$ and 
${\bf G}= (G,\ell): {\bf Y} \to {\bf Z}= (C, Z, p)$ are morphisms. The composition ${\bf G} \circ {\bf F}= (G \circ F, q)$ is then given by 
$$G \circ F =_{\rm def} \{ w: A \times C \;|\; 
(\exists y: B)(\langle \pi_1(w), y\rangle \rin F \land \langle y,
\pi_2(w) \rangle \rin G) \}$$
and since
$G \circ F : \rpw_{\rlevel{B} \inmax k \inmax \ell}(A \times C)$, we can take 
$q=\rlevel{B} \inmax k \inmax \ell$.
Hence
$${\bf G} \circ {\bf F} :  (A, X, m) \to  (C, Z, p).$$
For a local set ${\bf X}=(A, X, m)$ its identity map $1_{\bf X} : {\bf X} \to {\bf X}$ is given
by $1_{\bf X} = (1_X, m \inmax \elevel{A})$ where
$$1_X = \{ w: A \times A :  \pi_1(w) \rin X \land \pi_2(w) \rin X \land \pi_1(w) =_A \pi_2(w) \}$$
It is easily verified that the local sets form a category using extensional equality 
of maps. 

\subsection*{Real numbers}
Note that by the Functional Reducibility Axiom, every $$(F,k): (A, X, m) \to (B, Y, n)$$ is
extensionally equal to some $(F',\elevel{B} \inmax m \inmax n): (A, X, m) \to (B, Y, n)$, so hom-sets are
"small", in the sense that they are limited by the level of the domains and codomains.
In particular, for  $(A, X, m) = (B, Y, m) = {\mathbb N} = (\nattype, \{ x: \nattype \; | \; \top \}, 0)$ and any  $(F,k)$ as above is equal to some $(F',0) : {\mathbb N} \to {\mathbb N}$.  By this we may conclude that (Cauchy) real numbers of
IRTT + RDC all live on the first level of functions.

\subsection*{Quotient sets}
The  quotient sets construction is similar to that in
simple type theory or topos logic, but we need to keep track of
the levels of the power sets involved.
Let ${\bf X} = (A, X, m)$ be a local set and suppose ${\bf E}Ê= (A \times A, E, n)$ is a local set that represents
an equivalence relation on ${\bf X}$, i.e.\  it satisfies
\begin{eqnarray*}
&& x \rin X \Leftrightarrow \langle x,x \rangle \rin E \\
&& \langle x, y \rangle \rin E \Rightarrow \langle y,x \rangle \rin E \\
&& \langle x, y \rangle \rin E \land \langle y, z \rangle \rin E  \Rightarrow \langle x,z \rangle \rin E 
\end{eqnarray*}
Define the quotient $X/E : \rpw_{\ell}(\rpw_{\ell}(A))$ by
$$X/E = \{ S:\rpw_{\ell}(A) \; | \; (\exists x: A)(x\rin X \land (\forall y:A)(y \rin S \Leftrightarrow \langle x,y \rangle \rin E))\}$$
Here $\ell = m \inmax n \inmax \rlevel{A}$. Write $B=
\rpw_{\ell}(A)$. Thus we have a local set $ {\bf X}/{\bf E} =(B, X/E, \ell)$, that we shall see is the quotient set of ${\bf X}$ by ${\bf E}$. Define the quotient map $(Q, \ell)$ from
${\bf X}$ to  ${\bf X}/{\bf E}$ by
$$Q= \{w: A\times B \; |\; \pi_1(w) \rin X \land \pi_2(w) \rin X/E
\land \pi_1(w) \rin \pi_2(w) \}.$$
We show that $(Q, \ell)$ satisfies the universal property for a
quotient. Suppose that $(F, k): {\bf X} \to {\bf Z}=(C, Z, p)$ is a map that respects the equivalence
relation  $E$, i.e.\
\begin{equation} \label{eresp}
\langle x,z \rangle \rin F \land 
\langle x',z' \rangle \rin F \land \langle x,x'\rangle \rin E
\Longrightarrow z =_C z'.
\end{equation}
Define a map ${\bf H}=(H, r): {\bf X}/{\bf E} \to {\bf Z}$ by 
$$H =\{w : B\times C \; | \; \pi_1(w) \rin X/E \land 
(\exists u:A)(u\rin X \land u \rin \pi_1(w) \land 
\langle u, \pi_2(w) \rangle \rin F)\}.$$
Here $r= \ell \inmax k \inmax p$.
We have  that $\langle x,z\rangle \rin (H \circ Q)$ is equivalent to
$$x \rin X\land (\exists S:B)(S\rin X/E \land x \rin S \land 
(\exists u:A)(u\rin X \land u \rin S \land \langle u,z\rangle \rin F)).
$$
From this and (\ref{eresp}) it follows that $\langle x,z\rangle \rin
F$. Conversely, suppose $\langle x,z\rangle \rin
F$. For $S:B$ we may take $S= \{y: A \;|\; \langle x,y \rangle \rin
E\}$,
as $n \le \ell$. It follows immediately that $\langle x,z\rangle \rin
(H \circ Q)$. Thus $F$ is extensionally equal to $H \circ Q$.

Now suppose that ${\bf H'}=(H', s) : {\bf X}/{\bf E} \to {\bf Z}$ is another map such that $H' \circ Q$ is extensionally equal to $F$. We show
that $H'$ is extensionally equal to $H$: 
suppose $\langle S,z \rangle \in H'$. 
Then $S \rin X/E$, so there is some $x \rin X$ with $x \rin S$. Hence $\langle x,S \rangle \rin Q$,
and so $\langle x,z \rangle \rin (H' \circ Q)$. Thus  $\langle x,z \rangle \rin F$. By definition of
$H$ we get $\langle S, z \rangle \rin H$. For the converse, suppose $\langle S,z \rangle \rin H$.
Then there is some $ u \rin  X$ with $u \rin S$ and $\langle u,z \rangle \rin F$. Thus by assumption
$\langle u,z \rangle \rin  (H' \circ Q)$. Then there is some $T$ with $\langle u, T \rangle \in Q$ and
$\langle T, z \rangle \rin H'$. Then $u \rin T$, so in fact we have $S=T$ as $T \rin X/E$. Hence 
$\langle S, z \rangle \rin H'$ as required.

\subsection*{Products} For two local sets  ${\bf X}_1 =(A_1, X_1, m_1)$ and
${\bf X}_2 =(A_2, X_2, m_2)$, define their {\em binary product} to be
$$X_1\times X_2 =\{z : A_1\times A_2 \;|\; \pi_1(z) \rin X_1 \land \pi_2(z) \rin
X_2\}:
\rpw_{m_1 \inmax m_2}(A_1\times A_2).$$
This gives a local set ${\bf X}_1 \times {\bf X}_2 =
(A_1 \times A_2, X_1 \times X_2, m_1 \inmax m_2)$.
The projection maps are  
\begin{eqnarray*}
p_i &=& \{z: (A_1\times A_2) \times A_i \;|\; 
\pi_1(z) \rin X_1\times X_2 \land 
\pi_2(z)\rin X_i \land
\pi_i(\pi_1(z)) =_{A_i} \pi_2(z) 
 \} 
\end{eqnarray*}
for $i=1,2$.
Then $p_i: \rpw_{r_i}((A_1\times A_2) \times A_i)$, where
$r_i = m_1 \inmax m_2 \inmax \elevel{A_i}$,
and $$(p_i, r_i) :  {\bf X}_1 \times {\bf X}_2 \to {\bf X}_i.$$
For maps ${\bf F}=(F,p) : {\bf Z} \to {\bf X}_1$
and ${\bf G}=(G, q): {\bf Z} \to {\bf X}_2$, where  ${\bf Z} = (C, Z, k)$,
define 
$$\langle F, G\rangle = 
\{w: C \times (A_1 \times A_2) \;|\; \langle \pi_1(w),\pi_1(\pi_2(w))
  \rangle \rin F \land 
\langle \pi_1(w), \pi_2(\pi_2(w)) \rangle \rin G
  \}.$$
We have $\langle {\bf F}, {\bf G}\rangle =(\langle F,G\rangle, p \inmax q) : {\bf Z}  \to  {\bf X}_1 \times {\bf X}_2 $.
It is straightforward to check that these constructions make up
a category-theoretic product of the local sets  ${\bf X}_1$ and ${\bf X}_2$.

The {\em terminal object} (0-ary product) is given by the local set 
$\tilde{\bf 1} =({\bf 1}, \{ x: {\bf 1} \;|\; \top \}, 0)$.

\subsection*{Exponential sets}
Again this construction is similar to the construction in simple type theory.
Let ${\bf X} = (A, X, m)$ and ${\bf Y} = (B, Y, n)$ be local sets  and let $k= \elevel{B} \inmax m \inmax n$ be as in
the Functional Reducibility Axiom.  Define
$$Y^X = \{F: \rpw_k(A\times B)\; | \; \mbox{$F$ is a map from ${\bf X}$ to ${\bf Y}$} \}.$$
Then by examining the type level involved in the definition of amap one sees that
$Y^X: \rpw_s(\rpw_k(A \times B))$ where 
$s= \rlevel{A} \inmax  \elevel{A} \inmax \rlevel{B} \inmax \elevel{B} \inmax m \inmax n =
\rlevel{A} \inmax \rlevel{B} \inmax m \inmax n$, giving the
local set $${\bf Y}^{\bf X}=(\rpw_k(A \times B), Y^X, s).$$
We have that the product 
$${\bf Y}^{\bf X} \times {\bf X} = (\rpw_k(A\times B) \times A, Y^X \times X, s \inmax n)
= (\rpw_k(A\times B) \times A, Y^X \times X, s).$$
The evaluation map ${\bf ev} = ({\rm ev},s) : {\bf Y}^{\bf X} \times {\bf X}  \to {\bf Y}$ is given by
\begin{eqnarray*}
\lefteqn{
{\rm ev } = \{ w: (\rpw_k(A\times B) \times A) \times B \; |\; 
\pi_1(w) \rin Y^X \times X, \pi_2(w) \rin Y,} \\
&& \qquad \qquad \qquad \qquad \qquad  \qquad \qquad \langle\pi_2(\pi_1(w)), \pi_2(w) \rangle \rin \pi_1(\pi_1(w)) \}.
\end{eqnarray*}
Consider an arbitrary local set ${\bf Z} = (C, Z, p)$ and an arbitrary map 
${\bf G}=(G, q)$ from ${\bf Z} \times {\bf X}$ 
to ${\bf Y}$.
Define a map ${\bf H} = (H, k):  {\bf Z} \to {\bf Y}^{\bf X}$ by
\begin{eqnarray*}
\lefteqn{ H=\{w : C \times \rpw_k(A \times B) \; |\; \pi_1(w) \rin Z, \pi_2(w) \rin Y^X,  } \\
&&\qquad \qquad \qquad \qquad \qquad (\forall x:A)(\forall y:B)(\langle x,y \rangle \rin \pi_2(w) \Leftrightarrow \langle \langle \pi_1(w),x \rangle, y \rangle \rin G)
\}.
\end{eqnarray*}
Here $t=  \rlevel{A} \inmax \rlevel{B} \inmax \rlevel{C} \inmax m \inmax n \inmax p \inmax q$.
For $z: C$, $x:A$ and $y:B$ we have 
$$\langle \langle z, x\rangle, y\rangle \rin {\rm ev } \circ (H \times {\rm id}) =
 {\rm ev } \circ \langle H \circ p_1, p_2\rangle$$
if, and only if, there are $S: \rpw_k(A \times B)$ and $u:A$ with 
$$\langle \langle z, x \rangle, \langle S, u \rangle \rangle \rin  \langle H \circ p_1, p_2\rangle \land 
\langle \langle S, u \rangle,y \rangle \rin  {\rm ev },$$
 which is equivalent to
$$\mbox{$\langle z, S \rangle \rin H $ and  $
\langle \langle S, x \rangle, y \rangle \rin  {\rm ev }$}$$
and spelling this out we get
$$\mbox{$z \rin Z$, $S \rin Y^X$, $y \rin Y$, $\langle x,y \rangle \rin S$ and $(\forall x:A)(\forall y:B)(\langle x, y \rangle
 \rin S \Leftrightarrow 
\langle \langle z,x \rangle, y \rangle \rin G)$}$$
Clearly this implies  $\langle \langle z, x\rangle, y\rangle \rin G$. Thus we have shown the implication
$$\langle \langle z, x\rangle, y\rangle \rin {\rm ev } \circ (H \times {\rm id})  \Longrightarrow
\langle \langle z, x\rangle, y\rangle \rin G.$$
To show the converse, we assume $\langle \langle z, x\rangle, y\rangle \rin G$. Let
$$L = \{ w : A \times B\; | \; \langle\langle z, \pi_1(w) \rangle, \pi_2(w) \rangle \rin G \} : \rpw_q(A \times B).$$
This is a map from ${\bf X}$ to ${\bf Y}$. By the Functional Reducibility Axiom there is $S: \rpw_k(A \times B)$
so that 
$$(\forall u:A)(\forall v:B)(\langle u,v \rangle \rin S \Longleftrightarrow 
\langle u,v \rangle \rin L).$$
But $\langle x,y \rangle \rin L \Leftrightarrow \langle \langle z,x \rangle, y \rangle \rin G$, and  hence $\langle x,y \rangle \rin S$, 
so we have indeed $\langle \langle z, x\rangle, y\rangle \rin {\rm ev } \circ (H \times {\rm id})$. Thus ${\rm ev } \circ (H \times {\rm id})$ and $G$ are extensionally equal.

Finally, we check uniqueness of $H$. Suppose $H'$ is another map from 
${\bf Z}$ to ${\bf Y}^{\bf X}$ such that 
${\rm ev } \circ (H' \times {\rm id})$ and $G$ are extensionally equal. Thus we have 
\begin{equation} \label{ekv1}
\langle\langle z,x\rangle, y \rangle \rin G \Leftrightarrow (\exists S'': \rpw_k(A \times B))(\langle z,S'' \rangle 
\rin H'  \land \langle x,y \rangle \rin S'')
\end{equation}
We check that $H$ and $H'$ are equal maps. Suppose $\langle z, S \rangle \rin H$ and 
$\langle z, S' \rangle \rin H'$.  To prove: $S=_{\rpw_k(A \times B)}S'$. By definition of $H$ the equivalence 
\begin{equation} \label{ekv2}
(\forall x:A)(\forall y:B)(\langle x, y \rangle \rin S \Leftrightarrow \langle \langle z,x \rangle, y \rangle \rin G)
\end{equation}
Now since $H'$ is functional, the $S''$ in (\ref{ekv1}) can only be $S'$. Putting (\ref{ekv1}) and (\ref{ekv2}) together
we have 
$$(\forall x:A)(\forall y:B)(\langle x, y \rangle \rin S \Longleftrightarrow \langle x, y \rangle \rin S')$$
as required.

\subsection*{Equalizer set} 
Let ${\bf X} = (A, X, m)$ and ${\bf Y} = (B, Y, n)$ be local sets.  Assume that
$(F,k), (G,\ell): {\bf X} \to {\bf Y}$ are two maps. Define a local set ${\bf E} = (A, E, r)$
by 
$$E =\{ a : A \;|\;  (\exists y:B) (\langle a,y \rangle \rin F \land  \langle a,y \rangle \rin G) \}.$$
here $r= \rlevel{B} \inmax k \inmax \ell$. Define the inclusion ${\bf I}=(I,p) : {\bf E} \to {\bf X}$
by 
$$E= \{ z: A \times A : \pi_1(z) \rin E \land \pi_1(z) =_A \pi_2(z) \},$$
where $p = r \inmax ||A||$.

\subsection*{Characteristic functions} The local set 
$$\Omega_k = 
(\rpw_k(\unittype), \{ x: \rpw_k(\unittype) \; | \; \top \} ,0)$$
may be considered as the collection of possible truth values of level $k$, where 
the maximal subset $${\bf t}_k = \{x: \unittype \; | \; \top \}:  \rpw_k(\unittype)$$ is the value {\em true}. Note that for $u: \rpw_k(\unittype)$, we have $u \rin \Omega_k$ and
 $u ={\bf t}_k$ if, and only if, $\theelement \rin u$. 

Let ${\bf X}=(A,X,m)$ be
an arbitrary local set, and suppose that $Y: \rpw_{m \inmax k}(A)$ satisfies $Y \subseteq X$.
We define the relation 
$$K_Y = \{ z : A \times   \rpw_k(\unittype) \; | \;   \pi_1(z) \rin Y \land \theelement \rin \pi_2(z) \}.$$
Then $K_Y : \rpw_{m \inmax k}(A  \times  \rpw_k(\unittype))$, and this gives a 
map $$\chi_Y = (K_Y, m \inmax k): {\bf X} \to \Omega_k.$$
Now for $u: A$ with $u \rin X$,
$$\langle u, {\bf t}_k \rangle \rin K_Y \Longleftrightarrow u \rin Y.$$
For ${\bf F}=(F,n): {\bf X} \to \Omega_k$ and  $u: A$ with $u \rin X$,
\begin{equation} \label{chfun}
\langle u, {\bf t}_k \rangle \rin F
\end{equation}
is a formula of level $n$. By the Functional Reducibility Axiom, $(F,n)$ is extensionally equal to some $(F', \elevel{\rpw_k(\unittype)} \inmax m \inmax 0)= (F', k \inmax m)$. Thus (\ref{chfun}) can be at most of level $m \inmax k$.  

For the special case where $m \le k$, we have that characteristic functions 
$(A,X,m) \to \Omega_k$ correspond exactly to local sets $(A,Y,k)$  with Ê$Y\subseteq X$.
In particular,  characteristic functions ${\mathbb N} \to \Omega_k$  corresponds to subsets
$Y : \rpw_k(\nattype)$.

\begin{remark} {\em
It is to be expected that IRTT gives rise to a natural notion of 
predicative topos, but we leave the detailed investigation of this
for future work. }
\end{remark}

\section{Adding classical logic to {\bf IRTT}}

Adding classical logic to {\bf IRTT} we arrive at a version of Russell's theory with
the full reducibility axiom.

\cutout{
\medskip
Assuming the Law of Excluded Middle (LEM), a power of the unit type will have just two
elements up extensionality:  {\em true} ${\bf t}_k$ and  {\em false}, given by ${\bf f}_k =_{\rm def} \{ x: {\bf 1} \; | \; \bot \}$.
\medskip
\begin{lemma} Assume (LEM). Then for any $u: \rpw_k({\bf  1})$, $u=_{\rpw_k({\bf  1})} {\bf t}_k$ or
 $u=_{\rpw_k({\bf  1})} {\bf f}_k$.
\end{lemma}
\begin{proof} Let  $u: \rpw_k({\bf  1})$. Then by (LEM) we have $\theelement \rin u$ or 
$\lnot \theelement \rin u$. In the former case $u=_{\rpw_k({\bf  1})} {\bf t}_k$. In the latter
case assume $x:{\bf 1}$ and $x \rin u$. Now $x =_{\bf 1} \theelement$, so in fact 
$\theelement \rin u$. This is a contridaction. Hence  $x \rin {\bf f}_k$. Thus $u \subseteq 
{\bf f}_k$. Trivially ${\bf f}_k \subseteq  u$, so by extensionality $u=_{\rpw_k({\bf  1})} {\bf f}_k$.
\end{proof}}

For a type $A$, let $\tilde{A}=(A, \{ x: A \; | \; \top\},0)$ be the corresponding local set.
Thus $\tilde{N} = {\mathbb N}$. A particular case of
the Functional Reducibility Axiom for $\tilde{A}$ and $\tilde{N}$  is 
that for any $r$:
\begin{align}
\label{psra}
& (\forall F: \rpw_{r}(A \times N)) \\ \notag
& \qquad \Bigl[\mbox{$F$ map from $\tilde{A}$ to $\tilde{N}$} 
\Rightarrow \\ \notag
& \qquad \qquad \qquad\quad (\exists G:  \rpw_{0}(A \times N))(\forall z:A \times N)(z \rin F \Leftrightarrow z \rin G)\Bigr]
\end{align}
The Full Reducibility Axiom (\ref{fr}) follows from the  Principle of Excluded Middle (PEM): 
\begin{thm} Assume (PEM). Then for any type $A$ and level $k$:
$$(\forall X: \rpw_k(A))(\exists Y: \rpw_0(A))(\forall x:A)(x \rin X \Leftrightarrow x \rin Y).$$
\end{thm}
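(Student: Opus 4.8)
The strategy is to reduce the Full Reducibility Axiom for an arbitrary power type $\rpw_k(A)$ to the special case (\ref{psra}) of Functional Reducibility, where the codomain is $\tilde N$. The bridge is the observation, made in the section on characteristic functions, that under (PEM) a subset and its characteristic function carry the same information — and a characteristic function valued in a two-element set can be coded as a map into $\nattype$. So, given $X : \rpw_k(A)$, I would first use (PEM) to turn $X$ into a total functional relation $F \subseteq A \times N$ with $\langle a, 0 \rangle \rin F$ when $a \rin X$ and $\langle a, 1 \rangle \rin F$ when $\lnot(a \rin X)$; concretely, set
$$F = \{ z : A \times N \;|\; (\pi_2(z) =_{\nattype} 0 \land \pi_1(z) \rin X) \lor (\pi_2(z) =_{\nattype} \suc{0} \land \lnot(\pi_1(z) \rin X)) \}.$$

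**Key steps.**
First I would verify that $F$, as defined, is indeed a map from $\tilde A$ to $\tilde N$ in the sense of the paper: strictness is immediate since every element of $\tilde N$'s underlying type lies in its subset; totality uses (PEM) on $a \rin X$ to pick the value $0$ or $\suc 0$; functionality uses that $0 \ne \suc 0$ together with the fact that $a \rin X$ and $\lnot(a \rin X)$ cannot both hold. Note $F$ has some level $r$ depending on $k$ (essentially $r = k$), which is fine since (\ref{psra}) is stated for every $r$. Second, apply (\ref{psra}) to obtain $G : \rpw_0(A \times N)$ extensionally equal to $F$. Third, extract the desired level-$0$ subset by taking the fibre over $0$:
$$Y = \{ x : A \;|\; \langle x, 0 \rangle \rin G \} : \rpw_0(A).$$
Finally, I would check $x \rin X \Leftrightarrow x \rin Y$: if $x \rin X$ then $\langle x, 0 \rangle \rin F$, hence $\langle x, 0 \rangle \rin G$, hence $x \rin Y$; conversely if $x \rin Y$ then $\langle x,0 \rangle \rin G$, so $\langle x, 0 \rangle \rin F$, and since the only way to have $\langle x, 0 \rangle \rin F$ is via the first disjunct ($0 \ne \suc 0$ kills the second), we get $x \rin X$. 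This last direction is the one place where the argument genuinely uses that we picked distinct numerical codes.

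**The main obstacle.**
The substantive point — and the only place where real care is needed — is that the reduction must route through a \emph{functional, total} relation, because Functional Reducibility says nothing about arbitrary relations; this is why (PEM) is indispensable, as it is what lets us decide, for each $a$, which of the two codes to assign, thereby making $F$ total. Everything else is bookkeeping: checking the map conditions, checking the level of $F$ so that (\ref{psra}) applies, and unwinding the extensional equality $F \Leftrightarrow G$ to transfer it down to $X \Leftrightarrow Y$. A minor subtlety worth stating explicitly is that one needs $Y$ to land in $\rpw_0(A)$: since $G : \rpw_0(A \times N)$ and the defining formula of $Y$ is just $\langle x, 0 \rangle \rin G$, which is a level-$0$ formula, the set-abstraction $\{ x : A \;|\; \langle x,0 \rangle \rin G\}$ is legitimately a term of type $\rpw_0(A)$, so the conclusion has exactly the required form.
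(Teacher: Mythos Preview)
Your proposal is correct and follows essentially the same argument as the paper: encode membership in $X$ by a two-valued map into $\nattype$ (using PEM for totality), apply the special case (\ref{psra}) of Functional Reducibility to lower its level to $0$, and then read off $Y$ as the fibre over the ``true'' code. The only difference is cosmetic---you code $x\rin X$ by $0$ and $\lnot(x\rin X)$ by $\suc{0}$, whereas the paper uses $1$ and $0$ respectively---and your write-up is in fact slightly more explicit about functionality and the converse implication.
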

\begin{proof}  Let $X: \rpw_k(A)$. Define a relation
$$F = \{ z: A \times N \; | \; \pi_1(z) \rin X \land \pi_2(z) =_N 1 \lor
                                          \lnot (\pi_1(z) \rin X)  \land \pi_2(z) =_N 0\}.$$
Then $F : \rpw_{k}(A \times N)$. Now $F$ can be seen to be a map from $\tilde{A}$ to $\tilde{N}$, where (PEM) is used to prove totality. By (\ref{psra}) 
 above we get $G : \rpw_{0}(A \times N)$ such that
$$(\forall z:A \times N)(z \rin F \Leftrightarrow z \rin G).$$
Let 
$$Y= \{x: A \; | \; \langle x, 1 \rangle \rin G\}.$$
Then $Y: \rpw_0(A)$ and clearly
$$(\forall x:A)(x \rin X \Leftrightarrow x \rin Y).$$

\end{proof}

% Suppose that ${\bf F}: {\bf X} \to \Omega_k$ is a map, where ${\bf X}=(A,X,m)$
% and ${\bf F}=(F, n)$. 

% A one element local set is $\mathbbm{1} = ({\bf 1}, \{ x: {\bf 1} \; | \; \top \},0)$.
% Define the  map ${\bf t}: \mathbbm{1} \to \Omega_k$


\begin{thebibliography}{99}

\bibitem{Bell} J.L.\ Bell.  {\em Toposes and Local Set Theories.} Oxford 1988.
\bibitem{Fef} S.\ Feferman. Iterated inductive fixed-point theories: application
to Hancock's conjecture. In: G.\ Metakides (ed.) {\em Patras Logic Symposium}.
North-Holland, Amsterdam, 1982, pp.\ 171 -- 196.
\bibitem{KLN} F.\ Kamareddine, T.\ Laan and R.\ Nederpelt. Types in Logic and Mathematics Before 1940.
{\em Bulletin of Symbolic Logic} 8(2002), 185 -- 245.
\bibitem{KLN} T.\ Laan and R.\ Nederpelt. A Modern Elaboration of the Ramified Theory of Types. {\em Studia Logica} 57(1996), 243 -- 278.
\bibitem{ML84} P.\ Martin-L{\"o}f. {\em Intuitionistic Type Theory.} Notes
by Giovanni Sambin of a series of lectures given in Padova 1980. Bibliopolis 1984.
\bibitem{ML98} P.\  Martin-L{\"o}f. {\em An intuitionistic theory of types.}
In: J.M.\ Smith and G. Sambin (eds.) {\em Twenty-Five Years of
  Constructive Type Theory.}
Oxford University Press 1998.
\bibitem{My}  J.\ Myhill. A refutation of an unjustified attack on the axiom of reducibility. 
In: Bertrand Russell \& George Washington Roberts (eds.), {\em Bertrand Russell Memorial Volume. }Humanities Press. pp. 81 -- 90 (1979).
\bibitem{P08} E. Palmgren. Intuitionistic Ramified Type Theory. In: {\em Oberwolfach Reports, } vol 5, issue 2, 2008, 943 -- 946. 
\bibitem{R26} F.P.\ Ramsey. The Foundations of Mathematics.  
{\em Proceedings of the London Mathematical Society} vol 24, series 2, 1926.
\bibitem{R08} B.\ Russell. Mathematical Logic as Based on the Theory of Types. {\em American Journal
of Mathematics} 30(1908), 222 -- 262.
\bibitem{TvD} A.S.\ Troelstra and D.\ van Dalen. {\em Constructivism
  in Mathematics, vol.\ 1}. North-Holland 1988.
\end{thebibliography}
\end{document}